\declaretheorem[numberwithin=section]{theorem}  
\declaretheorem[sibling=theorem]{corollary}
\declaretheorem[sibling=theorem]{lemma}
\declaretheorem[sibling=theorem]{proposition}
\DeclareMathOperator\spec{Spec}
\DeclareMathOperator\pic{Pic}
\newcommand\dualab\hat
\newcommand\sh\mathscr
\newcounter{proofstep}
\xpretocmd{\proof}{\setcounter{proofstep}{0}}{}{}
\newcommand{\proofstep}[1]{%
  \par
  \addvspace{\medskipamount}%
  \stepcounter{proofstep}%
  \noindent\emph{Step \theproofstep: #1}\par\nobreak\smallskip
  \@afterheading
}
\address{Department of Mathematics, Stony Brook University, Stony Brook, NY 11794-3651}
\email{mads.villadsen@stonybrook.edu}
\subjclass[2010]{14J99 (Primary) 14F05 (Secondary)}
\author{Mads Bach Villadsen}
\date{}
\title{Kodaira dimension and zeros of holomorphic one-forms, revisited}
\begin{document}

\maketitle
\begin{abstract}
We give a new proof that every holomorphic one-form on a smooth complex projective variety of general type must vanish at some point, first proven by Popa and Schnell using generic vanishing theorems for Hodge modules.
Our proof relies on Simpson's results on the relation between rank one Higgs bundles and local systems of one-dimensional complex vectors spaces, and the structure of the cohomology jump loci in their moduli spaces.
\end{abstract}

\section{Introduction}
\label{sec:org0094009}
In \cite{Popa2012}, Popa and Schnell showed that any holomorphic one-form on a smooth projective variety of general type must vanish at some point, a conjecture of Hacon-Kovács and Luo-Zhang \cite{Hacon2005,Luo2005}. Wei \cite{Wei2017} later gave a slightly simplified argument (as well as a generalization to log-one-forms). Both proofs use the decomposition theorem and various vanishing theorems for Hodge modules. We give a new approach using only classical Hodge theory, namely the rank one case of Simpson's correspondence between Higgs bundles and local systems, and his results on the structure of cohomology jump loci of local systems. Our approach should thus be much more accessible than either of the two previous proofs.

As in \cite{Popa2012}, we will prove the following more precise result.

\begin{theorem}[{\cite[Theorem 2.1]{Popa2012}}]
Let \(X\) be a smooth complex projective variety and \(f\colon X\to A\) a morphism to an abelian variety. If \(H^0(X,\omega_X^{\otimes d}\otimes f^{\ast}L^{-1})\ne 0\) for some integer \(d\ge 1\) and some ample line bundle \(L\) on \(A\), then for every holomorphic one-form \(\omega\) on \(A\), the pullback \(f^{\ast}\omega\) vanishes at some point of \(X\).
\label{thm:A}
\end{theorem}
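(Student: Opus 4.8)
The plan is to argue by contrapositive: assume that some holomorphic one-form $\omega$ on $A$ has nowhere-vanishing pullback $\eta:=f^{\ast}\omega$, and deduce that $H^0(X,\omega_X^{\otimes d}\otimes f^{\ast}L^{-1})=0$ for every $d\ge 1$ and every ample $L$ on $A$. Throughout, the two moduli spaces in play are the character variety $M_B=\hom(\pi_1(X),\mathbb C^{\times})$ and the space $M_{\mathrm{Dol}}=\pic^0(X)\times H^0(X,\Omega^1_X)$ of rank-one Higgs bundles, identified by the rank-one case of Simpson's correspondence.

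The first half of the argument uses $\eta$ to produce a large acyclic region. For each $\alpha\in\pic^0(X)$ with associated unitary character $\chi_\alpha$ and each $t\in\mathbb C^{\times}$, the pair $(\alpha,t\eta)$ is a degree-zero rank-one Higgs bundle whose Dolbeault complex $(\alpha\otimes\Omega_X^{\bullet},\wedge\,t\eta)$ is, locally, the Koszul complex on the nowhere-vanishing section $t\eta$, hence exact; so all of its hypercohomology vanishes. By Simpson's correspondence the associated rank-one local system $\mathcal L_{\alpha,t}$ then satisfies $H^{k}(X,\mathcal L_{\alpha,t})=0$ for all $k$. Unwinding the correspondence, $\mathcal L_{\alpha,t}$ is the character $\gamma\mapsto\chi_{\alpha}(\gamma)\exp\!\bigl(2\operatorname{Re}\int_{\gamma}t\eta\bigr)$, so as $\alpha$ ranges over $\pic^0(X)$ and $t$ over $\mathbb C^{\times}$ these fill out the subset $U\cdot\exp(V_{\eta}\setminus\{0\})$ of $M_B$, where $U$ is the subgroup of unitary characters and $V_{\eta}\subseteq H^1(X,\mathbb R)$ is the real $2$-plane spanned by $\operatorname{Re}\eta$ and $\operatorname{Im}\eta$ (independent since $H^{1,0}\cap H^1(X,\mathbb R)=0$ and $\eta\ne 0$). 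Since $\eta=f^{\ast}\omega$ we have $V_{\eta}\subseteq f^{\ast}H^1(A,\mathbb R)$, and since nowhere-vanishing is an open condition on $H^0(\Omega^1_X)$ while $v\mapsto f^{\ast}v$ is linear, one in fact gets acyclicity on $U\cdot\exp(f^{\ast}W\setminus\{0\})$ for an open real cone $W\subseteq H^1(A,\mathbb R)$ containing $\operatorname{Re}\omega$. The upshot: \emph{no cohomology jump locus of $X$ meets this region.}

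The second half must extract a contradiction from $H^0(X,\omega_X^{\otimes d}\otimes f^{\ast}L^{-1})\ne 0$. After replacing $(d,L)$ by $(md,L^{\otimes m})$—harmless, as tensor powers of an effective line bundle are effective and the conclusion involves neither $d$ nor $L$—one may assume $L$ is very ample and $f^{\ast}L$ globally generated; Serre duality rewrites the hypothesis as non-vanishing of a top-degree cohomology group of $\omega_X^{\otimes(1-d)}\otimes f^{\ast}L$, where $f^{\ast}L$ is nef. Ampleness of $L$ makes $\phi_L\colon A\to\widehat A$ surjective, so translating $L$ realizes the twists by all of $f^{\ast}\widehat A\subseteq\pic^0(X)$, and the hypothesis thus places a point in the cohomology jump locus of a line bundle pulled back, up to $\omega_X^{\otimes d}$, from $f^{\ast}L$. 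The aim is then to invoke Simpson's structure theorem for jump loci of rank-one local systems—finite unions of torsion translates of subtori—together with the compatibility of the Hodge part with the Higgs-field part of $M_{\mathrm{Dol}}$, to conclude that this jump locus contains a positive-dimensional component in the $f^{\ast}$-direction; rescaling its Higgs field into the $\eta$-direction then lands it inside the acyclic region of the first half, a contradiction.

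The hard part is exactly this second half: converting the effectivity of $\omega_X^{\otimes d}\otimes f^{\ast}L^{-1}$—which a priori involves the ample class $c_1(L)$ lying outside $\pic^0(X)$—into a genuinely positive-dimensional cohomology jump locus that meets the forbidden region. This is the place where Popa–Schnell and Wei use generic vanishing for Hodge modules; here it must be recovered from Simpson's theory of jump loci alone, presumably combined with an induction on $\dim A$: when $A$ has a quotient elliptic curve one may hope that nowhere-vanishing of $f^{\ast}\omega$ forces a smooth fibration onto which one descends, restricting to a fibre and to the kernel abelian variety of dimension $\dim A-1$; but the essential new input, and the reason Simpson's machinery is needed, is precisely the residual case of an abelian variety carrying no nontrivial abelian subvariety, where no such geometric reduction exists and the Higgs-theoretic acyclicity of the first half has to do all the work.
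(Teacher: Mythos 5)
Your first half is sound and is essentially the easy direction of the paper's argument: if \(\eta=f^{\ast}\omega\) is nowhere vanishing, the Dolbeault complex of any rank-one Higgs bundle \((\alpha,t\eta)\) is a Koszul-type complex on a nowhere-vanishing one-form, hence exact, so the corresponding local systems are acyclic and no jump locus meets the region you describe. This is the content of the support statement for the complex \(C_X\) (the paper's Lemma \ref{lemma:support}) combined with the identification of the fibres of \(C_X^{\alpha}\) over \(V\) with Dolbeault complexes.

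The genuine gap is your second half, and you say so yourself: you never convert the hypothesis \(H^0(X,\omega_X^{\otimes d}\otimes f^{\ast}L^{-1})\ne 0\) into anything that meets the acyclic region. The obstruction you identify is real — the hypothesis involves \(\omega_X^{\otimes d}\) and the ample class \(c_1(L)\), neither of which lives in \(\pic^{\tau}(X)\times H^0(X,\Omega_X^1)\), so Simpson's rank-one theory on \(X\) cannot see it directly, and no amount of translating \(L\) by \(\phi_L\) or Serre duality fixes that. Your proposed remedies (induction on \(\dim A\), descent along an elliptic quotient, a ``residual'' simple-abelian-variety case) are speculation and are not how the difficulty is resolved; in fact no case distinction on \(A\) is needed. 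The missing idea is the Viehweg--Zuo-type covering construction: after an isogeny base change (Lemma \ref{lemma:isogeny}) one may assume \(B^{\otimes d}\) has a section with \(B=\omega_X\otimes f^{\ast}L^{-1}\); taking a resolution \(Y\) of the \(d\)-fold cyclic cover branched along that section yields injections \(B^{-1}\otimes\Omega_X^k\to\phi_{\ast}\Omega_Y^k\), hence a morphism of complexes \(B^{-1}\otimes C_{X,\bullet}\to\mathbf{R}\phi_{\ast}C_{Y,\bullet}\). One then applies the Simpson-theoretic local-freeness statement (Proposition \ref{prop:generically-locally-free-on-V}) not on \(X\) but on \(Y\), for a general twist \(\alpha\in\pic^0(A)\): the ambient sheaf \(R^0p_{2\ast}C_Y^{\alpha}\) is locally free on \(V\), so the image \(\sh F\) of the induced map is torsion free; it is supported on \(p_2(Z_f)\), and it is nonzero because its graded piece in degree \(g-n\) contains \(H^0(X,f^{\ast}(L\otimes\alpha))\ne 0\). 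Torsion-freeness then forces \(p_2(Z_f)=V\), which is the theorem. In short: your acyclicity observation is where the paper starts, but the positivity of \(L\) enters only through the cyclic cover of \(Y\) and the degree-\((g-n)\) piece \(f^{\ast}(L\otimes\alpha)\to\phi_{\ast}\omega_Y\otimes f^{\ast}\alpha\); without that construction your argument does not close.
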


The following conjecture of Luo and Zhang \cite{Luo2005} follows as in \cite{Popa2012}. For varieties of general type, this shows that every holomorphic one-form must vanish at some point.

\begin{corollary}[{\cite[Conjecture 1.2]{Popa2012}}]
Let \(X\) be a smooth complex projective variety and \(W\subseteq H^0(X,\Omega_X^1)\) be a linear subspace such that every element of \(W\setminus \{0\}\) is everywhere non-vanishing. Then \(\dim W\le\dim X-\kappa(X)\).
\end{corollary}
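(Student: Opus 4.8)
The plan is to deduce the corollary from Theorem~\ref{thm:A} by the usual reduction through the Albanese map and the Iitaka fibration, exactly as in \cite{Popa2012}. Since the Albanese map $a\colon X\to A:=\mathrm{Alb}(X)$ induces an isomorphism $a^{\ast}\colon H^0(A,\Omega^1_A)\xrightarrow{\sim}H^0(X,\Omega^1_X)$, I would regard $W$ as a subspace of $H^0(A,\Omega^1_A)$, so that $a^{\ast}\omega$ is everywhere non-vanishing for every $\omega\in W\setminus\{0\}$; we may assume $\kappa(X)\ge 0$. Passing to a smooth birational model, realize the Iitaka fibration as a morphism $\phi\colon X'\to Y$ with $Y$ smooth projective of general type, $\dim Y=\kappa(X)$, and general fibre $F$ smooth, connected, with $\kappa(F)=0$ and $\dim F=\dim X-\kappa(X)$; holomorphic one-forms are unchanged in passing to $X'$. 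I would then study the restriction map $\rho\colon H^0(X,\Omega^1_X)=H^0(X',\Omega^1_{X'})\to H^0(F,\Omega^1_F)$ and show that $\rho|_W$ is injective. Granting this, since $\kappa(F)=0$ the Albanese map of $F$ is surjective (a theorem of Kawamata), so $\dim_{\mathbb{C}}H^0(F,\Omega^1_F)=q(F)\le\dim F$, and therefore $\dim W=\dim\rho(W)\le\dim F=\dim X-\kappa(X)$, as desired.

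The injectivity of $\rho|_W$ is the heart of the argument. Suppose $\omega\in W$ restricts to zero on the general fibre $F$. Factoring $a|_F$ through the Albanese of $F$ and using that $\mathrm{alb}_F^{\ast}$ is injective on one-forms, one sees that $\omega$ is pulled back from the quotient abelian variety $B':=A/B$, where $B\subseteq A$ is the image of the induced homomorphism $\mathrm{Alb}(F)\to A$ (which is independent of the general fibre $F$). Hence the composite morphism $g\colon X\xrightarrow{a}A\to B'$ contracts the general fibre of the Iitaka fibration, so it factors as $g=q\circ\phi$ for a morphism $q\colon Y\to B'$ (a rational map from a smooth projective variety to an abelian variety is a morphism). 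Since $a^{\ast}\omega=g^{\ast}\omega$ is everywhere non-vanishing, Theorem~\ref{thm:A} applied to $g$ forces $H^0(X,\omega_X^{\otimes d}\otimes g^{\ast}L^{-1})=0$ for all $d\ge 1$ and all ample $L$ on $B'$. On the other hand $Y$ is of general type, so $\omega_Y$ is big, and $\omega_Y^{\otimes m}\otimes q^{\ast}L^{-1}$ is therefore effective for $m\gg 0$; because $g$ factors through $\phi$, the construction of the Iitaka fibration lets one transport a section of such a bundle on $Y$ to a nonzero section of $\omega_X^{\otimes N}\otimes g^{\ast}L^{-1}$ on $X$ for suitable $N\ge 1$ and ample $L$ (this is where the relative canonical bundle of $\phi$, resp.\ the base locus of the pluricanonical systems, enters as an effective twist). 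This contradicts the vanishing above, so $\omega=0$.

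The step I expect to be the main obstacle is precisely this positivity argument, together with getting the set-up right so that it can be invoked: Theorem~\ref{thm:A} has to be applied to the genuine morphism $g\colon X\to B'$---not to the birational model $X'$, nor to $Y$, since the everywhere-non-vanishing hypothesis on $a^{\ast}\omega$ does not pass cleanly to either---and one then needs the bookkeeping that relates pluricanonical forms on $X$ to big line bundles pulled back along the Iitaka fibration. This is exactly the part that ``follows as in \cite{Popa2012}''; the remaining ingredients are standard (the universal property of the Albanese, extension of rational maps to abelian varieties, birational invariance of pluricanonical sections, and Kawamata's surjectivity theorem for the Albanese in Kodaira dimension zero).
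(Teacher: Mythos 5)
Your overall reduction---pulling $W$ back from the Albanese variety, passing to a model $X'$ on which the Iitaka fibration is a morphism $\phi\colon X'\to Y$ with general fibre $F$, taking the subtorus $B\subseteq A$ generated by the image of $\mathrm{Alb}(F)$, the quotient $B'=A/B$, applying Theorem~\ref{thm:A} to the genuine morphism $g\colon X\to B'$ (so that the non-vanishing hypothesis is used only on $X$ itself), and closing with Kawamata's theorem and the dimension count $\dim W\le q(F)\le\dim F$---is essentially the deduction in \cite{Popa2012} that the paper refers to, and those steps are fine. The genuine gap is exactly in the step you flag as the main obstacle: verifying the hypothesis of Theorem~\ref{thm:A} for $g$. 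Your route rests on the claim that the Iitaka base $Y$ can be taken of general type, which is false in general: for a properly elliptic surface fibred over $\mathbb{P}^1$ the Iitaka base is $\mathbb{P}^1$, and since general type is a birational invariant no model of the base is of general type. Moreover, even granting bigness of $\omega_Y$, transporting a section of $\omega_Y^{\otimes m}\otimes q^{\ast}L^{-1}$ to a section of $\omega_X^{\otimes N}\otimes g^{\ast}L^{-1}$ requires a nonzero map $\phi^{\ast}\omega_Y^{\otimes m}\to\omega_{X'}^{\otimes N}$, i.e.\ global sections of (twists of) relative pluricanonical bundles; that is Viehweg-type weak positivity, not bookkeeping, and it is not supplied by the construction of the Iitaka fibration.

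The standard fix avoids both issues and uses only the defining property of the Iitaka fibration: for $d$ sufficiently large and divisible, the rational map defined by $|dK_X|$ is birationally $\phi$, so (after possibly blowing up $X'$ further to resolve the base locus) one can write $dK_{X'}\sim\phi^{\ast}H+E$ with $H$ ample on the image model of $Y$ and $E$ effective. Since $q^{\ast}L$ is a fixed line bundle on $Y$ and $H$ is ample, $kH-q^{\ast}L$ is effective for $k\gg 0$, hence $H^0(X',\omega_{X'}^{\otimes kd}\otimes\phi^{\ast}q^{\ast}L^{-1})\ne 0$. Because $\phi^{\ast}q^{\ast}L=\mu^{\ast}g^{\ast}L$ for the modification $\mu\colon X'\to X$, the projection formula together with $\mu_{\ast}\omega_{X'}^{\otimes N}\subseteq\omega_X^{\otimes N}$ yields $H^0(X,\omega_X^{\otimes kd}\otimes g^{\ast}L^{-1})\ne 0$, which is the hypothesis of Theorem~\ref{thm:A} for $g$ and an ample $L$ on $B'$. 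With this replacement your argument closes as intended: any $\omega\in W$ restricting to zero on $F$ is pulled back from $B'$, hence vanishes somewhere by Theorem~\ref{thm:A}, so $\omega=0$; therefore $\rho|_W$ is injective and $\dim W\le q(F)\le\dim F=\dim X-\kappa(X)$.
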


Our proof of Theorem \ref{thm:A} goes as follows. Let \(V=H^0(A,\Omega_A^1)\) and
\begin{align*}
Z_f=\{(x,\omega)\in X\times V\mid f^{\ast}\omega(T_xX)=0\}.
\end{align*}
The goal is to show that the restriction of the projection \(p_2\colon X\times V\to V\) to \(Z_f\) is surjective.

We borrow the idea in \cite{Popa2012}, going back to work of Viehweg-Zuo \cite{Viehweg2000}, of constructing two separate sheaves on \(V\). The first is an ambient sheaf coming from a cyclic cover of \(X\), which we will show to be locally free. The second is a non-zero subsheaf coming from \(X\) and supported on \(p_2(Z_f)\). As the subsheaf is necessarily torsion free, it must have support equal to \(V\), hence \(p_2(Z_f)=V\) as desired.

Let us outline the argument for why the ambient sheaf is locally free.

After base change by an isogeny of \(A\), we can assume that \((\omega_X\otimes f^{\ast}L^{-1})^{\otimes d}\) has a non-zero section \(s\). Let \(Y\) be a resolution of singularities of the associated degree \(d\) cyclic cover of \(X\) branched along \(s\), and consider the composition \(h\colon Y\to A\).

The ambient sheaf is a higher direct image of a complex of sheaves on \(Y\times V\), and the fibres of the complex over points in \(V\) are Dolbeault complexes of certain Higgs bundles on \(Y\). Using Simpson's results \cite{Simpson1992,Simpson1993} relating Higgs bundles to local systems and analyzing cohomology jump loci in the moduli space of local systems, we show that the hypercohomology groups of these Dolbeault complexes have constant dimension over \(V\). This gives the result by Grauert's theorem on locally free direct images.

\subsection*{Acknowledgements}
I would like to thank my advisor Christian Schnell for pointing me to this topic, and for many enlightening remarks. I also thank Ben Wu for related discussions and useful comments on drafts of the paper.

\section{The proof}
\label{sec:orgda44d7e}
Fix a smooth projective variety \(X\) over the complex numbers and a morphism \(f\colon X\to A\) to an abelian variety throughout. Let \(V=H^0(A,\Omega_A^1)\) be the vector space of holomorphic one-forms on \(A\), and let \(S=\operatorname{Sym}V^{\ast}\) be the graded coordinate ring of the vector space \(V\). For an integer \(i\), let \(S_{\bullet+i}\) denote \(S\) as a graded module over itself, with grading shifted by \(i\), and let \(C_{X,\bullet}\) be the complex of graded \(\mathcal{O}_X\otimes S\)-modules given by
\begin{align*}
\mathcal{O}_X\otimes S_{\bullet-g}\to\Omega_X^1\otimes S_{\bullet-g+1}\to\cdots\to\Omega_X^n\otimes S_{\bullet-g+n},
\end{align*}
in degrees \(-n\) to \(0\), where \(n=\dim X,g=\dim A\), and the differential is induced by the map \(\mathcal{O}_X\otimes V\to\Omega_X^1\) given by \(\phi\otimes\omega\mapsto \phi f^{\ast}\omega\). In a basis \(\omega_1,\ldots,\omega_g\) of \(V\) with dual basis \(s_1,\ldots,s_g\) of \(S_1\), the differential is given by
\begin{align*}
\theta\otimes s\mapsto\sum_{i=1}^g (\theta\wedge f^{\ast}\omega_i)\otimes s_is.
\end{align*}
We denote the associated complex of vector bundles on \(X\times V\) by \(C_X\).

\begin{lemma}[{\cite[Lemma 14.1]{Popa2012}}]
The support of \(C_X\) is equal to
\begin{align*}
Z_f=\{(x,\omega)\in X\times V\mid f^{\ast}\omega(T_xX)=0\}.
\end{align*}
\label{lemma:support}
\end{lemma}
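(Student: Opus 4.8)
The plan is to reduce the claim to a pointwise computation of the fibres of $C_X$. Since $S=\operatorname{Sym}V^{\ast}$ is the coordinate ring of $V$, the grading shifts do not change the underlying sheaves, and the description of the differential shows that over a point $(x,\omega)\in X\times V$ the fibre of $C_X$ is the complex of $\mathbb{C}$-vector spaces
\begin{align*}
\wedge^0 T_x^{\ast}X\to\wedge^1 T_x^{\ast}X\to\cdots\to\wedge^n T_x^{\ast}X,\qquad \eta\mapsto\eta\wedge(f^{\ast}\omega)(x),
\end{align*}
where $(f^{\ast}\omega)(x)\in T_x^{\ast}X$ is the value at $x$ of the holomorphic one-form $f^{\ast}\omega$; indeed, evaluating the linear forms $s_i$ at $\omega$ returns the coordinates of $\omega$ with respect to the basis $\omega_1,\dots,\omega_g$. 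The first point is that $(f^{\ast}\omega)(x)=0$ as an element of $T_x^{\ast}X$ precisely when $f^{\ast}\omega(T_xX)=0$, i.e.\ precisely when $(x,\omega)\in Z_f$.

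Next I would record the elementary fact about Koszul complexes: for a vector $v$ in an $n$-dimensional vector space $W$, the complex $(\wedge^{\bullet}W,\ -\wedge v)$ is exact when $v\ne 0$ — complete $v$ to a basis to split off a two-term acyclic complex — while for $v=0$ all differentials vanish, so every term $\wedge^i W$ ($0\le i\le n$) survives in cohomology. Applying this with $W=T_x^{\ast}X$ and $v=(f^{\ast}\omega)(x)$ shows that the fibre complex of $C_X$ at $(x,\omega)$ is exact if and only if $(x,\omega)\notin Z_f$; moreover, for $(x,\omega)\in Z_f$ the last differential of $C_X$ restricts to zero on that fibre.

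It then remains to relate fibrewise exactness to the supports of the cohomology sheaves $\mathcal{H}^i(C_X)$. Here I would invoke the standard semicontinuity argument for complexes of vector bundles: the rank of each differential of $C_X$ on fibres is lower semicontinuous, and since consecutive differentials compose to zero the sum of the ranks of the two differentials meeting at a given term of $C_X$ is always at most the rank of that term, with equality at all terms exactly at the points of fibrewise exactness. Lower semicontinuity then forces all these ranks to be locally constant near any such point, so there the kernels, images, and cohomology sheaves of $C_X$ are vector bundles — the last ones necessarily of rank zero. Hence $C_X$ is exact as a complex of sheaves over the open set $(X\times V)\setminus Z_f$, which gives $\supp C_X\subseteq Z_f$. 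For the reverse inclusion, the cohomology sheaf $\mathcal{H}^0(C_X)$ in the top degree is the cokernel of the last differential of $C_X$, and cokernels commute with restriction to fibres, so by the preceding paragraph $\mathcal{H}^0(C_X)$ has non-zero fibre at every point of $Z_f$; thus $Z_f\subseteq\supp C_X$, and the two sets coincide.

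I do not anticipate any real difficulty: the substance is the fibrewise Koszul computation, which is immediate once the differential is unwound. The only step calling for any care is the last one — transferring fibrewise exactness to exactness of the complex of sheaves — which is handled by semicontinuity of the ranks of the differentials.
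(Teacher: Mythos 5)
Your proof is correct. The paper does not reprove this lemma but simply cites \cite[Lemma 14.1]{Popa2012}, whose argument is the same fibrewise Koszul-complex observation you make (exactness of $(\wedge^{\bullet}T_x^{\ast}X,\ \wedge\, f^{\ast}\omega(x))$ off $Z_f$, non-surjectivity of the last differential on $Z_f$), and your semicontinuity step — ranks of the differentials are lower semicontinuous while $d\circ d=0$ bounds their sums, so fibrewise exactness propagates to exactness of the complex of sheaves on the open complement of $Z_f$ — is a sound way to make the passage from fibres to sheaves explicit.
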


For \(\alpha\in\pic^0(A)\), let \(C_X^{\alpha}=C_X\otimes p_1^{\ast}f^{\ast}\alpha\) and \(C_{X,\bullet}^{\alpha}=C_{X,\bullet}\otimes f^{\ast}\alpha\), where \(p_1\) is the projection \(X\times V\to X\). The sheaves \(R^ip_{2\ast}C_X^{\alpha}\) on \(V\) are then supported on \(p_2(Z_f)\) for all \(i\); recall that we are trying to show that \(p_2(Z_f)=V\). We will show that these sheaves are locally free for general \(\alpha\) in Proposition \ref{prop:generically-locally-free-on-V} below. As we will see, the fibres of \(C_X^{\alpha}\) over \(V\) are related to certain Higgs bundles on \(X\).

Recall that a Higgs bundle on \(X\) is a vector bundle \(E\) together with a morphism of coherent sheaves \(\theta\colon E\to\Omega_X^1\otimes E\), the Higgs field, satisfying \(\theta\wedge\theta=0\). Given a Higgs bundle, we get a holomorphic Dolbeault complex
\begin{align*}
E\xrightarrow{\theta\wedge} E\otimes\Omega_X^1\to\cdots\to E\otimes\Omega_X^n.
\end{align*}

Simpson's non-abelian Hodge theorem \cite{Simpson1992} associates to each Higgs bundle \((E,\theta)\) (satisfying some conditions on stability and Chern classes) a local system \(\mathbb{C}_{(E,\theta)}\) of complex vector spaces, and shows that Dolbeault cohomology
\begin{align*}
H^k_{\mathrm{Dol}}(X,E,\theta)=\mathbf{H}^k(X,E\xrightarrow{\theta\wedge} E\otimes\Omega_X^1\to\cdots\to E\otimes\Omega_X^n),
\end{align*}
the hypercohomology of the Dolbeault complex, is isomorphic to the cohomology of \(\mathbb{C}_{(E,\theta)}\).

We will only need the rank one case; see also the lecture notes \cite[Lectures 17-18]{Schnell2013} for a concrete treatment of this case, and the associated Hodge theory.

In the rank one case, a Higgs bundle is just a line bundle together with a holomorphic one-form. The stability condition in Simpson's theorem is always satisfied for line bundles, and the condition on Chern classes is simply that the first Chern class vanishes in \(H^2(X,\mathbb{C})\); let \(\pic^{\tau}(X)\) be the space of line bundles satisfying this condition. Let then \(M_{\mathrm{Dol}}(X)=\pic^{\tau}(X)\times H^0(X,\Omega_X^1)\), and let \(M_{\mathrm{B}}(X)\) denote the moduli space of local systems of one-dimensional complex vector spaces on \(X\). Then Simpson's correspondence, mapping a rank one Higgs bundle to the associated local system, takes the form of a real analytic isomorphism \(M_{\mathrm{Dol}}(X)\cong M_{\mathrm{B}}(X)\).

For each \(k\) and \(m\), consider the cohomology jump loci
\begin{align*}
\Sigma_{m}^k(X)&=\{\mathcal{L}\in M_{\mathrm{B}}(X)\mid \dim H^k(X,\mathcal{L})\ge m\}\\
\Sigma_{m}^k(X)_{\mathrm{Dol}}&=\{(E,\theta)\in M_{\mathrm{Dol}}(X)\mid \dim H^k_{\mathrm{Dol}}(X,E,\theta)\ge m\}
\end{align*}
of local systems and Dolbeault cohomology of Higgs bundles. These loci get mapped to each other under Simpson's correspondence.

Using this relationship, Simpson \cite{Simpson1993} proves that every irreducible component of these loci is a linear subvariety or, in his terminology, a translate of a triple torus (in fact a torsion translate, though we will not need that). A triple torus is a closed, connected, algebraic subgroup \(N\) of \(M_{\mathrm{B}}(X)\) such that the corresponding subgroup of \(M_{\mathrm{Dol}}(X)\) (which we will also refer to as a linear subvariety) is also algebraic (this is equivalent to the usual definition, involving also the de Rham moduli space, by \cite[Lemma 2.1]{Simpson1993}). A linear subvariety is thus a subset of \(M_{\mathrm{B}}(X)\) of the form
\begin{align*}
\{\mathcal{L}\otimes \mathcal{N}\mid \mathcal{N}\in N\}
\end{align*}
where \(N\) is a triple torus and \(\mathcal{L}\in M_{\mathrm{B}}(X)\) a local system. 

Simpson \cite[Lemma 2.1]{Simpson1993} shows that a triple torus is of the form \(g^{\ast}M_{\mathrm{B}}(T)\) for a map \(g\colon X\to T\) to an abelian variety, where \(g^{\ast}\colon M_{\mathrm{B}}(T)\to M_{\mathrm{B}}(X)\) denotes pullback of local systems. It follows that a linear subvariety in \(M_{\mathrm{Dol}}(X)\) is a translate of a subset of the form \(g^{\ast}\pic^0(T)\times g^{\ast}H^0(T,\Omega_T^1)\) for \(g\colon X\to T\) a morphism to an abelian variety. In particular, a linear subvariety is either the entire moduli space, or maps to a proper subvariety of \(\pic^0(X)\) under the projection \(M_{\mathrm{Dol}}(X)\to\pic^0(X)\) that forgets the Higgs field.

The following proposition is the main new ingredient in the proof. Note that this proposition is valid for an arbitrary morphism \(f\colon X\to A\), not just those that satisfy the hypotheses of Theorem \ref{thm:A}.

\begin{proposition}
For general \(\alpha\in\pic^0(A)\), the higher direct image sheaves \(R^ip_{2\ast}C_X^{\alpha}\) are locally free on \(V\) for all \(i\).
\label{prop:generically-locally-free-on-V}
\end{proposition}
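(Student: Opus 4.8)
The plan is to apply Grauert's theorem, so the goal is to show that the function $v \mapsto \dim \mathbf{H}^i$ of the fibre complex $C_X^\alpha|_{X \times \{v\}}$ is constant on $V$ for general $\alpha$, for every $i$. First I would identify the fibre of $C_X^\alpha$ over a point $v \in V$, corresponding to a one-form $\omega_v = f^\ast(v) \in H^0(X, \Omega_X^1)$: it is precisely the Dolbeault complex of the rank one Higgs bundle $(f^\ast\alpha, \omega_v)$ on $X$, where the Higgs field is wedging with $\omega_v$ (the shift $S_{\bullet - g + k}$ only twists by the trivial bundle $\mathcal{O}_V$ fibrewise, so it disappears on fibres). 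Hence $\dim \mathbf{H}^i(C_X^\alpha|_{X \times \{v\}}) = \dim H^i_{\mathrm{Dol}}(X, f^\ast\alpha, \omega_v)$, and via Simpson's correspondence this equals $\dim H^i(X, \mathbb{C}_{(f^\ast\alpha, \omega_v)})$ — the dimension of the cohomology of the rank one local system attached to that Higgs bundle.

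The key point is then to understand the image of the map $\Phi_\alpha \colon V \to M_{\mathrm{Dol}}(X)$, $v \mapsto (f^\ast\alpha, f^\ast v)$. This map lands in the ``vertical'' affine subspace $\{f^\ast\alpha\} \times f^\ast V$ over the single point $f^\ast\alpha \in \pic^\tau(X)$ (indeed in $\pic^0(X)$, since $\pic^0(A)$ maps into $\pic^0(X)$). The jump locus $\Sigma^i_m(X)_{\mathrm{Dol}}$ is, by Simpson's theorem, a finite union of linear subvarieties; each such component is either all of $M_{\mathrm{Dol}}(X)$, or its image in $\pic^0(X)$ is a proper subvariety (as recalled in the excerpt). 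I would argue: if a component is all of $M_{\mathrm{Dol}}(X)$, it meets the vertical subspace in the whole of $\{f^\ast\alpha\} \times f^\ast V$, contributing a constant $\dim H^i$; if a component $T_{i,m}$ is proper, then its projection to $\pic^0(X)$ misses $f^\ast\alpha$ for $\alpha$ outside a proper (countable union of proper) subvariet$\text{y}$(ies) of $\pic^0(A)$ — so for general $\alpha$, the vertical subspace over $f^\ast\alpha$ avoids $T_{i,m}$ entirely. Taking the (finite, over all $i$ and all relevant $m$) intersection of these conditions, for general $\alpha$ the function $v \mapsto \dim H^i_{\mathrm{Dol}}(X, f^\ast\alpha, f^\ast v)$ is constant on $V$.

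A technical wrinkle I would need to address: the jump loci $\Sigma^i_m(X)_{\mathrm{Dol}}$ are defined inside $M_{\mathrm{Dol}}(X)$, whose $\pic$-component is $\pic^\tau(X)$, not $\pic^0(X)$; but $\pic^0(X)$ is the identity component of $\pic^\tau(X)$ with finite index, and $f^\ast\pic^0(A) \subseteq \pic^0(X)$, so intersecting with $\pic^0(X)$ loses nothing. Also ``general $\alpha$'' here must be interpreted as: outside a countable union of proper closed subsets, since $m$ ranges over all positive integers — but only finitely many pairs $(i,m)$ give nonempty jump loci with nontrivial behaviour (cohomology dimensions are bounded), so in fact finitely many conditions suffice and ``general'' can be taken in the Zariski sense. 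Finally, with the dimension function constant on all of $V$ for each $i$, Grauert's theorem (applied inductively from the top, or via the standard cohomology-and-base-change package) gives that every $R^i p_{2\ast} C_X^\alpha$ is locally free on $V$.

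The main obstacle is the second paragraph: correctly matching the fibrewise Dolbeault cohomology of $C_X^\alpha$ with Dolbeault cohomology of the Higgs bundle $(f^\ast\alpha, f^\ast v)$, and then leveraging the \emph{linearity} of Simpson's jump loci — specifically the dichotomy that a linear subvariety either fills $M_{\mathrm{Dol}}(X)$ or projects to a proper subvariety of $\pic^0(X)$ — to force the generic constancy. Everything else (Grauert, the isogeny reductions) is standard.
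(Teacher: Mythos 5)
Your setup is the same as the paper's (identify the fibres of \(C_X^{\alpha}\) over \(v\in V\) with the Dolbeault complexes of the rank one Higgs bundles \((f^{\ast}\alpha,f^{\ast}v)\), invoke Simpson's linearity of the jump loci, finish with Grauert/EGA for complexes), but there is a genuine gap at the genericity step, which is precisely the ``main obstacle'' you flag. You argue: if a component \(T\subseteq M_{\mathrm{Dol}}(X)\) of a jump locus is proper, its projection to \(\pic^{0}(X)\) is a proper subvariety \(Z\), hence \(f^{\ast}\alpha\notin Z\) for \(\alpha\) outside a proper subvariety of \(\pic^{0}(A)\). That last inference is unjustified: the bad set of \(\alpha\) is \((f^{\ast})^{-1}(Z)\subseteq\pic^{0}(A)\), and since \(f^{\ast}\colon\pic^{0}(A)\to\pic^{0}(X)\) is in general neither surjective nor injective, its image can be entirely contained in \(Z\) (for instance when the triple torus underlying \(T\) comes from a map \(g\colon X\to T'\) through which \(f\) factors, so that \(f^{\ast}\pic^{0}(A)\subseteq g^{\ast}\pic^{0}(T')\)). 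In that case \emph{no} choice of \(\alpha\) avoids \(Z\), and avoiding the projection was your only mechanism for making the slice \(\{f^{\ast}\alpha\}\times f^{\ast}V\) disjoint from \(T\); you are left with slices that meet \(T\) without any argument that they are contained in it, so constancy of \(\dim H^{i}_{\mathrm{Dol}}\) along the slice does not follow.

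The paper's proof sidesteps exactly this by pulling the linear subvariety back along \(\phi=f^{\ast}\colon M_{\mathrm{Dol}}(A)\to M_{\mathrm{Dol}}(X)\) \emph{before} projecting: the connected component of the preimage of a triple torus is again a triple torus, so \(\phi^{-1}(S)\) is empty or a finite union of linear subvarieties of \(M_{\mathrm{Dol}}(A)\); the dichotomy is then applied inside \(M_{\mathrm{Dol}}(A)\), where a proper linear subvariety really does project to a proper closed subset of \(\pic^{0}(A)\), so a general \(\alpha\) avoids it and the whole slice \(\{\alpha\}\times V\) misses \(\phi^{-1}(S)\), i.e.\ \(\{f^{\ast}\alpha\}\times f^{\ast}V\) misses \(S\). (One can alternatively repair your version by a Hodge-symmetry argument: if \(f^{\ast}\pic^{0}(A)\) lies in the \(\pic\)-projection of the underlying triple torus, then, because the relevant subspaces of \(H^{1}\) are conjugation-stable sub-Hodge structures, \(f^{\ast}V\) lies in the direction space of the Higgs-field part, so each slice is either contained in or disjoint from \(T\); but this is an extra argument you would need to supply, and the pullback to \(M_{\mathrm{Dol}}(A)\) is the cleaner route.) The rest of your outline --- finitely many nonempty jump loci, the \(\pic^{\tau}\) versus \(\pic^{0}\) point, and the reduction to Grauert --- matches the paper and is fine.
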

\begin{proof}
We will show that for general \(\alpha\), the dimensions of the hypercohomology \(H^i(X\times \{v\},C_X^{\alpha}|_{X\times \{v\}})\) of \(C_X^{\alpha}\) on fibres of \(p_2\) are constant in \(v\). The result follows by a version of Grauert's theorem on locally free direct images for complexes of sheaves \cite[Proposition 7.8.4]{EGAIII}

Note that for any fibre \(X\times \{v\}\) of \(p_2\) for \(v\in V\), the restriction of \(C_X^{\alpha}\) to the fibre is the Dolbeault complex 
\begin{align*}
f^{\ast}\alpha\xrightarrow{\wedge f^{\ast}v} f^{\ast}\alpha\otimes\Omega_X^1\to\cdots\to f^{\ast}\alpha\otimes\Omega_X^n.
\end{align*}
of the Higgs bundle \((f^{\ast}\alpha,f^{\ast}v)\). The Dolbeault cohomology of these Higgs bundles is governed by the cohomology jump loci \(\Sigma_{m}^k(X)_{\mathrm{Dol}}\), of which only finitely many are nonempty by algebraicity. Each irreducible component of the nonempty ones is a linear subvariety, so it suffices to show that for each linear subvariety \(S\) of \(M_{\mathrm{Dol}}(X)\), the set \(\{\alpha\}\times f^{\ast}V\) is either entirely contained in \(S\) or entirely disjoint from it, for general \(\alpha\).

Let then \(\phi=f^{\ast}\colon M_{\mathrm{Dol}}(A)\to M_{\mathrm{Dol}}(X)\), and suppose \(S\subset M_{\mathrm{Dol}}(X)\) is a linear subvariety. We observe that if \(N\) is a triple torus, then the connected component of \(\phi^{-1}(N)\) is again a triple torus; it follows that \(\phi^{-1}(S)\) is either empty, or a finite union of linear subvarieties. If \(\phi^{-1}(S)=M_{\mathrm{Dol}}(A)\) then \(\{\alpha\}\times f^{\ast}V\subset S\) for any \(\alpha\in\pic^0(A)\). If \(S\) is a proper subset of \(M_{\mathrm{Dol}}(A)\), it suffices to take \(\alpha\) to be outside the image of \(\phi^{-1}(S)\) in \(\pic^0(A)\) under the projection \(M_{\mathrm{Dol}}(A)\to\pic^0(A)\).
\end{proof}

If we could show that one of the sheaves \(R^ip_{2\ast}C_X^{\alpha}\) were nontrivial on \(V\), under the hypotheses of Theorem \ref{thm:A}, we would now be done. Unfortunately we cannot, but instead we make use of a covering construction as in \cite{Popa2012}.

\begin{lemma}[{\cite[Lemma 11.1]{Popa2012}}]
Suppose \(\omega_X^{\otimes d}\otimes f^{\ast}L^{-1}\) has a nonzero section for some \(d\) and some ample line bundle \(L\) on \(A\). For an isogeny \(\phi\colon A'\to A\), define \(f'\colon X'\to A'\) by base change of \(f\). For an appropriately chosen \(\phi\), there exists an ample line bundle \(L'\) on \(A'\) such that \((\omega_{X'}\otimes f'^{\ast}L'^{-1})^{\otimes d}\) has a nonzero section.
\label{lemma:isogeny}
\end{lemma}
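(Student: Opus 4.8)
The plan is to take the isogeny $\phi$ to be multiplication by $d$ on $A$ itself, i.e.\ $A'=A$ and $\phi=[d]\colon A\to A$, so that $X'=X\times_A A$ is the fibre product along $[d]$. The projection $\psi\colon X'\to X$ is then the base change of the étale map $[d]$, hence étale, so $X'$ is again smooth projective, $\omega_{X'}\cong\psi^{\ast}\omega_X$, and from $f\circ\psi=[d]\circ f'$ one gets $\psi^{\ast}f^{\ast}L\cong f'^{\ast}[d]^{\ast}L$. Pulling back the hypothesised section along $\psi$ (finite and surjective, hence injective on global sections of a line bundle) shows that $\psi^{\ast}\bigl(\omega_X^{\otimes d}\otimes f^{\ast}L^{-1}\bigr)\cong\omega_{X'}^{\otimes d}\otimes f'^{\ast}\bigl([d]^{\ast}L\bigr)^{-1}$ has a nonzero section. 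It therefore suffices to exhibit an ample line bundle $L'$ on $A$ with $L'^{\otimes d}\cong[d]^{\ast}L$, since then $(\omega_{X'}\otimes f'^{\ast}L'^{-1})^{\otimes d}\cong\omega_{X'}^{\otimes d}\otimes f'^{\ast}\bigl([d]^{\ast}L\bigr)^{-1}$ inherits this section.

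To construct $L'$, I would first use that $[d]^{\ast}$ acts as multiplication by $d^{2}$ on $H^{2}(A,\mathbb{Z})$, hence on $\mathrm{NS}(A)$; thus $[d]^{\ast}L$ and $L^{\otimes d^{2}}$ have the same class in $\mathrm{NS}(A)$, so $[d]^{\ast}L\cong L^{\otimes d^{2}}\otimes\alpha$ for some $\alpha\in\pic^{0}(A)$. Since $\pic^{0}(A)$ is a divisible group, choose $\beta\in\pic^{0}(A)$ with $\beta^{\otimes d}\cong\alpha$ and set $L'=L^{\otimes d}\otimes\beta$; then $L'^{\otimes d}\cong L^{\otimes d^{2}}\otimes\beta^{\otimes d}\cong[d]^{\ast}L$, as desired.

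The one point requiring a little care is that $L'$ must be \emph{ample}, not merely a line bundle with the prescribed $d$-th power: here $L^{\otimes d}$ is ample, $\beta\in\pic^{0}(A)$, and on an abelian variety tensoring an ample bundle by a point of $\pic^{0}$ preserves ampleness (ampleness of a non-degenerate line bundle is detected by the index of its first Chern class, an algebraic-equivalence invariant). Everything else is routine bookkeeping, so I expect the write-up to be short; if one wants $X'$ connected for the subsequent cyclic-cover construction, one may replace it by a connected component without affecting any of the above.
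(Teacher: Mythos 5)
Your proposal is correct and takes essentially the standard approach: the paper itself gives no proof, citing \cite[Lemma 11.1]{Popa2012}, whose argument likewise chooses an isogeny under which \(L\) pulls back to a \(d\)-th power of an ample line bundle and then pulls the section back along the resulting \'etale cover, exactly as you do with \(\phi=[d]\), the N\'eron--Severi computation, and divisibility of \(\pic^0(A)\). Your attention to the ampleness of \(L'\) and to possible disconnectedness of \(X'\) covers the only delicate points, so nothing is missing.
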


Assume now the hypotheses of Theorem \ref{thm:A}. Note that zero loci of one-forms are not affected by étale covers, so if we can prove the theorem for \(f'\colon X'\to A'\), then the desired conclusion also follows for \(f\colon X\to A\). 

In particular, replacing \(f\) by this \(f'\), we can now assume without loss of generality that \(B^{\otimes d}\) has a nonzero section \(s\) for \(B=\omega_X\otimes f^{\ast}L^{-1}\). Let \(Y\) be a resolution of singularities of the \(d\)-fold cyclic cover \(\pi\colon X_d\to X\) ramified along \(Z(s)\), giving us the following maps:
\begin{equation*}
\begin{tikzcd}
Y\ar[rr,bend left,"\phi"]\ar[r]\ar[rrd,"h"]&X_d\ar[r,"\pi"]&X\ar[d,"f"]\\
&&A
\end{tikzcd}
\end{equation*}

By construction, \(X_d=\spec \bigoplus_{i=0}^{d-1}B^{-i}\), so \(\pi_{\ast}\pi^{\ast}B=\bigoplus_{i=-1}^{d-2}B^{-i}\). This has a section in the \(i=0\) term, and the corresponding section of \(\pi^{\ast}B\) gives a morphism \(\phi^{\ast}B^{-1}\to \mathcal{O}_Y\), an isomorphism away from \(Z(s)\). Together with pullback of forms, this gives injective morphisms \(\phi^{\ast}(B^{-1}\otimes\Omega_X^k)\to\Omega_Y^k\). As \(\mathcal{O}_X\to\phi_{\ast}\mathcal{O}_Y\) is injective, the corresponding morphisms
\begin{align*}
B^{-1}\otimes\Omega_X^k\to\phi_{\ast}\Omega_Y^k
\end{align*}
on \(X\) are also injective.

Note that we get a complex \(C_{Y,\bullet}\) of graded \(\mathcal{O}_Y\otimes S\)-modules using the morphism \(h\colon Y\to A\), constructed in the same way that \(C_{X,\bullet}\) was constructed starting from \(f\) above Lemma \ref{lemma:support}.

We give a slightly modified version of \cite[Lemma 13.1]{Popa2012}.

\begin{lemma}
The morphisms above induce a morphism of complexes of graded \(\mathcal{O}_X\otimes S\)-modules
\begin{align*}
B^{-1}\otimes C_{X,\bullet}\to \mathbf{R}\phi_{\ast}C_{Y,\bullet}
\end{align*}
\label{lemma:morphism-to-C_Y}
\end{lemma}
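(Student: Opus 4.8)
The plan is to build the morphism one degree at a time out of the injective sheaf maps $\psi_k\colon B^{-1}\otimes\Omega_X^k\to\phi_{\ast}\Omega_Y^k$ produced just above, extend them over the graded pieces of $S$, verify that they intertwine the two Koszul‑type differentials, and finally compose with the canonical morphism $\phi_{\ast}C_{Y,\bullet}\to\mathbf{R}\phi_{\ast}C_{Y,\bullet}$ to land in the derived pushforward.

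First I would note that $\phi\colon Y\to X$ is generically finite, so $\dim Y=n$ and $C_{Y,\bullet}$ is concentrated in the same degrees $-n,\dots,0$ as $C_{X,\bullet}$; a chain map $B^{-1}\otimes C_{X,\bullet}\to\phi_{\ast}C_{Y,\bullet}$ can therefore be specified componentwise. In degree $-n+k$ I need a map $B^{-1}\otimes\Omega_X^k\otimes S_{\bullet-g+k}\to\phi_{\ast}(\Omega_Y^k\otimes S_{\bullet-g+k})$. Each graded piece of $S_{\bullet-g+k}$ is a finite‑dimensional $\mathbb{C}$‑vector space, so the projection formula gives $\phi_{\ast}(\Omega_Y^k\otimes S_{\bullet-g+k})=\phi_{\ast}\Omega_Y^k\otimes S_{\bullet-g+k}$, and I can simply take $\psi_k\otimes\id_{S_{\bullet-g+k}}$; these maps are $\mathcal{O}_X\otimes S$‑linear and homogeneous of degree zero by construction.

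The substance of the lemma is that these components commute with the differentials. Since both differentials are $S$‑linear and, in a basis $\omega_1,\dots,\omega_g$ of $V$, are sums over $i$ of the operations $\theta\otimes s\mapsto(\theta\wedge f^{\ast}\omega_i)\otimes s_is$ on $C_{X,\bullet}$ and $\eta\otimes s\mapsto(\eta\wedge h^{\ast}\omega_i)\otimes s_is$ on $C_{Y,\bullet}$, it suffices to check, for each $i$, the commutativity of
\begin{equation*}
\begin{tikzcd}
B^{-1}\otimes\Omega_X^k\ar[r,"{\wedge\, f^{\ast}\omega_i}"]\ar[d,"\psi_k"'] & B^{-1}\otimes\Omega_X^{k+1}\ar[d,"\psi_{k+1}"]\\
\phi_{\ast}\Omega_Y^k\ar[r,"{\wedge\, h^{\ast}\omega_i}"'] & \phi_{\ast}\Omega_Y^{k+1},
\end{tikzcd}
\end{equation*}
where the top arrow is $b\otimes\theta\mapsto b\otimes(\theta\wedge f^{\ast}\omega_i)$ and the bottom arrow is $\phi_{\ast}$ of wedging with the global form $h^{\ast}\omega_i=\phi^{\ast}(f^{\ast}\omega_i)$. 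Unwinding $\psi_k$ — up to the $\phi^{\ast}\dashv\phi_{\ast}$ adjunction it is the tensor product of the section $\phi^{\ast}B^{-1}\to\mathcal{O}_Y$ coming from the cyclic cover with the pullback‑of‑forms map $\phi^{\ast}\Omega_X^k\to\Omega_Y^k$ — this square reduces to the identities $\phi^{\ast}(\theta\wedge f^{\ast}\omega_i)=(\phi^{\ast}\theta)\wedge\phi^{\ast}(f^{\ast}\omega_i)$ and $\phi^{\ast}(f^{\ast}\omega_i)=h^{\ast}\omega_i$, the section factor being a regular function that passes through the wedge. Granting this, the maps $\psi_k\otimes\id$ assemble into a morphism of complexes of graded $\mathcal{O}_X\otimes S$‑modules $B^{-1}\otimes C_{X,\bullet}\to\phi_{\ast}C_{Y,\bullet}$; postcomposing with the canonical morphism $\phi_{\ast}C_{Y,\bullet}\to\mathbf{R}\phi_{\ast}C_{Y,\bullet}$ (concretely, $\phi_{\ast}$ applied to an injective resolution of $C_{Y,\bullet}$) produces the asserted morphism.

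I expect the whole argument to be essentially bookkeeping: the one genuinely substantive point is the exterior‑algebra compatibility in the last square, together with the (routine) observation that the cyclic‑cover section and the form‑pullback do not interfere. Everything else — that $\dim Y=n$, that $\phi_{\ast}$ commutes with tensoring by the finite‑dimensional graded pieces of $S$, and $\mathcal{O}_X\otimes S$‑linearity and homogeneity of the component maps — is immediate from the constructions already in place.
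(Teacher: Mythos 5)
Your construction is correct, and it reaches the lemma by a slightly different route than the paper. The paper works upstairs on \(Y\): it first observes that the maps \(\phi^{\ast}(B^{-1}\otimes\Omega_X^k)\to\Omega_Y^k\) commute with the differentials, giving a morphism \(\phi^{\ast}(B^{-1}\otimes C_{X,\bullet})\to C_{Y,\bullet}\), and then descends to \(X\) formally, composing \(B^{-1}\otimes C_{X,\bullet}\to (B^{-1}\otimes C_{X,\bullet})\otimes^{\mathbf{L}}\mathbf{R}\phi_{\ast}\mathcal{O}_Y\cong\mathbf{R}\phi_{\ast}\phi^{\ast}(B^{-1}\otimes C_{X,\bullet})\to\mathbf{R}\phi_{\ast}C_{Y,\bullet}\) via the unit \(\mathcal{O}_X\to\mathbf{R}\phi_{\ast}\mathcal{O}_Y\) and the projection formula. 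You instead stay downstairs: you assemble the already-constructed maps \(B^{-1}\otimes\Omega_X^k\to\phi_{\ast}\Omega_Y^k\) into an honest chain map \(B^{-1}\otimes C_{X,\bullet}\to\phi_{\ast}C_{Y,\bullet}\) (checking compatibility with the differentials by passing through the \(\phi^{\ast}\dashv\phi_{\ast}\) adjunction, which reduces to exactly the same identity on \(Y\) that the paper asserts, namely \(\phi^{\ast}(\theta\wedge f^{\ast}\omega_i)=\phi^{\ast}\theta\wedge h^{\ast}\omega_i\) with the cyclic-cover section acting as a scalar), and then compose with the canonical augmentation \(\phi_{\ast}C_{Y,\bullet}\to\mathbf{R}\phi_{\ast}C_{Y,\bullet}\). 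The two constructions yield the same morphism, since the derived unit factors through the underived pushforward; yours avoids the derived projection formula entirely, and it has the small advantage of making transparent the identification, used later in the proof of Theorem \ref{thm:A}, of the degree \(g-n\) graded piece with the sheaf map \(f^{\ast}L=B^{-1}\otimes\omega_X\to\phi_{\ast}\omega_Y\), whereas the paper's formulation is more compact and sidesteps any componentwise bookkeeping.
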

\begin{proof}
The morphisms \(\phi^{\ast}(B^{-1}\otimes \Omega_X^k)\to\Omega_Y^k\)
commute with the differentials on \(Y\), giving
\begin{align*}
\phi^{\ast}(B^{-1}\otimes C_{X,\bullet})\to C_{Y,\bullet}
\end{align*}
Using the projection formula and the morphism \(\mathcal{O}_X\to \mathbf{R}\phi_{\ast}\mathcal{O}_Y\), pushing forward to \(X\) gives the desired composition
\begin{align*}
B^{-1}\otimes C_{X,\bullet}\to (B^{-1}\otimes C_{X,\bullet})\otimes^{\mathbf{L}}\mathbf{R}\phi_{\ast}\mathcal{O}_Y\to \mathbf{R}\phi_{\ast}C_{Y,\bullet}
\end{align*}
\end{proof}

\begin{proof}[{Proof of Theorem \ref{thm:A}}]
We must show that \(Z_f\) surjects onto \(V\) under the projection \(p_2\colon X\times V\to V\).

Let \(\alpha\in\pic^0(A)\) be a general element. Then Lemma \ref{lemma:morphism-to-C_Y} gives, after twisting by \(f^{\ast}\alpha\) and pushing forward to \(V\), a morphism \(\mathbf{R}p_{2\ast}(p_1^{\ast}B^{-1}\otimes C_X^{\alpha})\to \mathbf{R}p_2^{\ast}C_Y^{\alpha}\) where \(p_1\colon X\times V\to X\) is the first projection, and \(p_2\), by abuse of notation, is used for both of the projections \(X\times V\to V\) and \(Y\times V\to V\). Let \(\sh F\) be the image of the induced map \(R^0p_{2\ast}(p_1^{\ast}B^{-1}\otimes C_X^{\alpha})\to R^0p_{2\ast}C_Y^{\alpha}\).

As \(\alpha\) is general, each \(R^ip_{2\ast}C_Y^{\alpha}\) is locally free by Proposition \ref{prop:generically-locally-free-on-V}. In particular \(\sh F\) is torsion free. Since \(C_X\) is supported on \(Z_f\), \(\sh F\) is supported on \(p_2(Z_f)\), so it suffices to show that \(\sh F\) is non-zero.

Let \(k=g-n\).  Then \(C_{X,k}=\omega_X\) and \(C_{Y,k}=\omega_Y\), and the morphism \(B^{-1}\otimes C_{X,k}\to \mathbf{R}\phi_{\ast}C_{Y,k}\) from Lemma \ref{lemma:morphism-to-C_Y} is just the morphism of sheaves \(f^{\ast}L=B^{-1}\otimes\omega_X\to\phi_{\ast}\omega_Y\) constructed before the lemma. Indeed \(\mathbf{R}\phi_{\ast}\omega_Y=\phi_{\ast}\omega_Y\) since \(\phi\) is generically finite, by results of Kollár \cite{Kollar1986a}.

After twisting by \(\alpha\), the morphism \(B^{-1}\otimes C_{X,k}^{\alpha}\to \mathbf{R}\phi_{\ast}C_{Y,k}^{\alpha}\) thus gets identified with \(f^{\ast}(L\otimes\alpha)\to\phi_{\ast}\omega_Y\otimes f^{\ast}\alpha\). For the graded \(S\)-module \(\sh F_{\bullet}=H^0(V,\sh F)\), it follows that \(\sh F_k\cong H^0(X,f^{\ast}(L\otimes\alpha))\) since the pushforward to \(V\) preserves injectivity.
But \(f^{\ast}(L\otimes\alpha)\) has non-zero sections; otherwise all sections of its pushforward \(f_{\ast}\mathcal{O}_X\otimes L\otimes\alpha\) to \(A\) would vanish, which would imply that \(X\) is contained in a general translate of a hyperplane section of \(A\), a contradiction. Thus \(\sh F\) is non-zero.
\end{proof}

\defbibheading{bibliographysection}[\refname]{
  \section*{#1}
}
\printbibliography[heading=bibliographysection]
\end{document}